\newtheorem{theorem}{Theorem}
\newtheorem{definition}[theorem]{Definition}
\newtheorem{lemma}[theorem]{Lemma}
\newcommand{\N}{\mathbb{N}}
\newcommand{\R}{\mathbb{R}}
\begin{document}
\title[]{The Dirichlet problem with entire data for non-hyperbolic quadratic hypersurfaces}
\author{J. M. Aldaz and H. Render}
\address{H. Render: School of Mathematical Sciences, University College
Dublin, Dublin 4, Ireland.}
\email{hermann.render@ucd.ie}
\address{J.M. Aldaz: Instituto de Ciencias Matem\'aticas (CSIC-UAM-UC3M-UCM)
and Departamento de Matem\'aticas, Universidad Aut\'onoma de Madrid,
Cantoblanco 28049, Madrid, Spain.}
\email{jesus.munarriz@uam.es}
\email{jesus.munarriz@icmat.es}
\thanks{2020 Mathematics Subject Classification: \emph{Primary: 35A20}, 
\emph{Secondary: 35A01}}
\thanks{Key words and phrases: \emph{Dirichlet problem, entire harmonic function, nonhyperbolic quadratic surface}}
\thanks{The first named author was partially supported by Grant PID2019-106870GB-I00 of the MICINN of Spain,  by ICMAT Severo Ochoa project
CEX2019-000904-S (MICINN), and by
the Madrid Government (Comunidad de Madrid - Spain)  V PRICIT (Regional Programme of Research and Technological Innovation), 2022-2024.}

\begin{abstract}
We show that for all homogeneous polynomials $
	f_{m}$ of degree $m$, in $d$ variables, 
	and each $j = 1, \dots , d$, we have
	\begin{equation*}
		\left\langle x_{j}^{2}f_{m},f_{m}\right\rangle _{L^{2}\left( \mathbb{S}%
			^{d-1}\right) }
			\geq
			\frac{\pi ^{2}}{4\left( m+ 2 d + 1 \right)^{2}}
			\left
			\langle
		f_{m},f_{m}\right\rangle _{L^{2}\left( \mathbb{S}^{d-1}\right) }.
		\end{equation*}
This result is used to establish the existence of entire harmonic solutions
of the Dirichlet problem,  when the data are given
by entire functions of order sufficiently low 
on nonhyperbolic quadratic hypersurfaces. 
\end{abstract}

\maketitle

\section{Introduction}

The Dirichlet problem has been solved for very general domains of $\R^d$. S. J. Gardiner proved that solutions exist, and
not just for continuous boundary data on unbounded domains, but  even for 
locally bounded Borel data (cf. the characterization given in  
 \cite[Theorem 1]{Gard93} and the comments following it). A different question that naturally arises
 is whether harmonic solutions can be extended beyond the boundary when the
 data is sufficiently regular, perhaps yielding global solutions. Additional information on these matters can be found in reference
 \cite{KL}.
 
 One way of ensuring the existence of global solutions is to prove
 that if the data is given by a polynomial, or more generally, by an entire function, the solution
 is also entire. Following a line of research started in \cite{Shap89},  D. Khavinson and H. S. Shapiro showed that for every entire function $f$ on $\mathbb{C}^{d}$, the solution $h$
of the Dirichlet problem for the ellipsoid with data function $f$ (restricted to
the boundary) has a harmonic continuation to $\mathbb{R}^{d}$, and hence it extends to an entire function on $\mathbb{C}^{d}$, see \cite[Theorem 1]{KhSh92}. They also asked whether this property characterized ellipsoids,
cf. \cite[Page 460]{KhSh92}, a problem that  is still open in full generality, though
under some additional assumptions a positive answer is known (see \cite[Theorem 27]{Rend08}). An equivalent version of the Khavinson-Shapiro problem appears in \cite{Rend16}.
 
 The aim of the present paper is to show that results related to the Khavinson-Shapiro Theorem hold for Dirichlet
problems on some specific, not necessarily bounded domains in $\mathbb{R}^{d}$, namely, on nonhyperbolic quadratic hypersurfaces.  The Dirichlet problem for polynomial data on such hypersurfaces 
was studied in \cite{AGV04}, where the authors present an algorithm to compute polynomial harmonic solutions.
Next we recall  \cite[Definition 4]{AGV04}. 

\begin{definition}   \label{D:nonhyperbolic} Let  $a_{1},\dots , a_d$, $b_1, \ldots, b_d$ and $c$,  be real numbers.
A \textit{nonhyperbolic} quadratic polynomial $q$ has the form
\[
q(x) := \sum_{j = 1}^d a_j^2 x_j^2 + \sum_{j=1}^d b_jx_j + c,
\]
where at least one $a_j \ne 0$. A nonhyperbolic quadratic hypersurface is the zero set $\{q = 0\}$.
\end{definition}

Writing $q = P_2 + P_1 + P_0$, where $P_i$ denotes the homogeneous part of $q$ of degree $i$, we set 
$\beta =1$ if $P_1 \ne 0$ and $\beta = 0$ otherwise. Then we have

\begin{theorem} \label{arbdimParCyl}
Let  $d\geq 2$, and let $q$  be a nonhyperbolic quadratic polynomial. If $f$ is an entire function of order $\rho < 1 - \beta/2$, then then there is a harmonic entire function of order bounded by $\rho$, which coincides with $f$ 
on $\{q = 0\}$.
\end{theorem}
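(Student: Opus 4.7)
The plan is to combine the polynomial Dirichlet algorithm of \cite{AGV04} with the quantitative control on multiplication by $P_2$ provided by the inequality of the abstract, then sum the resulting polynomial harmonic solutions over the Taylor expansion of $f$. Write $P_2 = \sum_j a_j^2 x_j^2$, set $A := \sum_j a_j^2 > 0$, denote by $\|\cdot\|_2$ the $L^2(\mathbb{S}^{d-1})$ norm, and by $\mathcal{P}_m$ the space of homogeneous polynomials of degree $m$.

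The first step is to turn the given inequality into a lower bound on multiplication by $P_2$. Summing the inequality termwise with weights $a_j^2$ and applying Cauchy--Schwarz to $\langle P_2 f_m, f_m\rangle_2 \leq \|P_2 f_m\|_2 \|f_m\|_2$, I obtain
$$\|P_2 f_m\|_2 \;\geq\; \frac{A\pi^2}{4(m+d+1)^2}\,\|f_m\|_2 \qquad (f_m \in \mathcal{P}_m),$$
so multiplication by $P_2$ is injective on each $\mathcal{P}_m$ with inverse (on its image) of norm $O(m^2)$. Next, I would revisit the algorithm of \cite{AGV04} which, for any polynomial $f$ of degree $N$, produces $g$ and a harmonic polynomial $h$ with $f = qg + h$. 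The algorithm is recursive: it iteratively decomposes the top homogeneous part of the current residual along the splitting $\mathcal{P}_m = P_2 \mathcal{P}_{m-2} + H_m$ (with $H_m$ the harmonic polynomials of degree $m$), extracts a harmonic contribution, and subtracts $q \cdot (\text{quotient})$ from the residual, lowering its top degree by $1 + \beta$ per step (since $\deg P_1 = 1$ contributes when $\beta = 1$, while only $\deg P_0 = 0$ contributes when $\beta = 0$). Thus the algorithm terminates in $N/(1+\beta)$ steps, and using Step~1 to control the quotient at each step, the harmonic polynomial $h_N$ for homogeneous data $f_N \in \mathcal{P}_N$ satisfies an estimate of the form
$$\|h_N\|_2 \;\leq\; C^N (N!)^{1+\beta}\,\|f_N\|_2.$$

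For entire $f = \sum_N f_N$ of order $\rho$, Cauchy-type estimates give $\|f_N\|_2 \leq (N!)^{-1/\rho'}$ for every $\rho' > \rho$, hence $\|h_N\|_2 \leq C^N (N!)^{1+\beta - 1/\rho'}$. Since $\rho < 1-\beta/2 = 1/(1+\beta)$ (with equality for $\beta \in \{0,1\}$), one can choose $\rho < \rho' < 1/(1+\beta)$; then the exponent $1 + \beta - 1/\rho'$ is strictly negative, the series $\sum_N h_N$ converges absolutely on every compact set to an entire harmonic function $h$ agreeing with $f$ on $\{q = 0\}$, and a careful accounting of the decay rate --- together with regrouping the sum by homogeneous components --- yields the claimed bound $\rho(h) \leq \rho$.

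The main obstacle, in my view, will be in the last paragraph: deriving the sharp order bound $\rho(h) \leq \rho$ rather than some strictly weaker function of $\rho$. Since $h_N$ is a polynomial of degree $\leq N$ but is \emph{not} homogeneous, its degree-$m$ component contributes (for every $m \leq N$) to the degree-$m$ homogeneous part of $h = \sum_N h_N$; one must exploit both the strong decay of $\|f_N\|_2$ and the fact that $h_N$ has degree $\leq N$ to match the order bound, since the naive bound $\|h_m^{\text{hom}}\|_2 \leq \sum_{N\geq m}\|h_N\|_2$ only yields an order of the form $\rho/(1-(1+\beta)\rho)$, which is strictly larger than $\rho$.
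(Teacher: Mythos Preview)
Your outline aims at the right ingredients, but the paper's proof is far shorter and does not carry out the analysis you sketch. It simply observes that Theorem~\ref{arbdimB} verifies hypothesis~\eqref{eqmainassumption} of Theorem~\ref{ThmMain1} (the special case $\alpha=2$, $k=1$ of \cite[Theorem~1]{RendAl22}), and that theorem already delivers the Fischer decomposition $f=q\cdot s+r$ with $\Delta r=0$ and $s,r$ entire of order $\le\rho$. The harmonic solution is $r$, and the proof ends there.

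What you propose is, in effect, a fresh proof of \cite[Theorem~1]{RendAl22}. The obstacle you name in your last paragraph is real and is exactly where the content of that result lies: the bound $\|h_N\|_2\le C^N(N!)^{1+\beta}\|f_N\|_2$ together with the naive estimate $\sum_{N\ge m}\|h_N\|_2$ does indeed yield only order $\rho/(1-(1+\beta)\rho)$, strictly worse than $\rho$. Your sketch supplies no mechanism to close this gap, so as written it is incomplete for the stated conclusion (it would give an entire harmonic extension of \emph{some} finite order, not of order $\le\rho$). If you want to fill this in rather than cite \cite{RendAl22}, the point is to track the homogeneous components of the Fischer quotient and remainder directly through the recursion, rather than bounding each $h_N$ in bulk and then regrouping.

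One small correction: subtracting $q\cdot g$ with $g\in\mathcal P_{m-2}$ lowers the top degree by $2$ when $\beta=0$ and by $1$ when $\beta=1$, i.e.\ by $2-\beta$, not $1+\beta$, so the number of steps is $N/(2-\beta)$. Your exponent $(N!)^{1+\beta}$ is nonetheless correct, since $2/(2-\beta)=1+\beta$ for $\beta\in\{0,1\}$.
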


As special cases we mention paraboloids, where $q(x) =  a_2^2 x_{2}^{2}+\cdots +a_d^2 x_{d}^{2} - x_1$
and  $\rho < 1/2$, ellipsoidal cylinders, where $q(x) = a_2^2 x_{2}^{2}+\cdots +a_d^2 x_{d}^{2} - 1$
and  $\rho < 1$, and slabs, where $q(x) = a_d^2 x_{d}^{2} - 1$
and  $\rho < 1$. The case of the ellipsoidal cylinders had already been treated in  \cite{KLR17} by potential-theoretic methods, under the same hypothesis $\rho < 1$, but without reaching the conclusion that the
 harmonic solution has finite order bounded by $\rho$.

 The question whether a version of Theorem \ref{arbdimParCyl}   holds without any assumptions  on the order of  $f$, is both open and interesting.  For slabs, the existence of harmonic entire solutions
 is obtained in \cite{KLR17b}, under no assumptions on the order of the entire data functions. But there the argument depends on the Schwarz  reflection principle for harmonic functions vanishing on  hyperplanes, so this technique is not available for general quadratic surfaces.  In passing, we mention that when  $d=2$, harmonic entire functions vanishing on a parabola  are characterized in  \cite[p. 427]{FNS66}; it seems plausible that in this case the answer might be affirmative.

 Finally, in the (bounded) case of ellipsoids, we have already mentioned the Khavinson-Shapiro Theorem for arbitrary entire functions. They also note that if the data function is of exponential type, so is the harmonic solution (cf. \cite[P. 459 Remarks]{KhSh92}). In \cite[Theorem 1]{Armi04} D. Armitage shows that the order of the solution is bounded by the order of the data function, and if the order is the same, then the type of the solution is bounded by the type of the data function.
 Thus, for ellipsoids Theorem \ref{arbdimParCyl} only yields strictly weaker results: since via a rotation any ellipsoid can be expressed as $q(x) = a_1^2 x_{1}^{2}+\cdots +a_d^2 x_{d}^{2} - 1= 0$,  we take $\beta = 0$ and thus, $\rho < 1$.
 
 The results presented here are obtained by generalizing to arbitrary dimensions Theorems 3, 4 and 5 from
 \cite{RendAl22}, proven there only for the case $d =2$. Theorem \ref{arbdimParCyl} is obtained via the following estimate, together with \cite[Theorem 1]{RendAl22}).

 \begin{theorem} \label{arbdimB}
 	For all homogeneous polynomials $
 	f_{m}$ of degree $m$ in $d$ variables, 
 	and each $j = 1, \dots , d$, we have
 	\begin{equation*}
 		\left\langle x_{j}^{2}f_{m},f_{m}\right\rangle _{L^{2}\left( \mathbb{S}
 			^{d-1}\right) }
 		\geq
 		\frac{\pi ^{2}}{4\left( m + 2 d + 1 \right)^{2}}
 		\left
 		\langle
 		f_{m},f_{m}\right\rangle _{L^{2}\left( \mathbb{S}^{d-1}\right) }.
 	\end{equation*}
 \end{theorem}

\section{Jacobi polynomials}

In the following we need the Jacobi polynomials $P_{n}^{\left( \alpha
,\alpha \right) }\left( x\right) $ of degree $n\ge 0$ and parameter  \color{red} $\alpha > -1,$ \color{black}
see \cite[2.4.1, p. 29]{Szeg39}. Here $x\in\R$. For $n \ge 1$ the Jacobi polynomials $P_{n}^{\left( \alpha ,\alpha
\right) }\left( x\right) $ satisfy the recurrence relation 
\begin{equation*}
xP_{n}^{\left( \alpha ,\alpha \right) }\left( x\right) =a_{n}\left( \alpha
\right) P_{n+1}^{\left( \alpha ,\alpha \right) }+g_{n}\left( \alpha \right)
P_{n-1}^{\left( \alpha ,\alpha \right) }\left( x\right)
\end{equation*}
with coefficients $a_{n}\left( \alpha \right) $ and $g_{n}\left( \alpha
\right) $ defined by 
\begin{eqnarray*}
a_{n}\left( \alpha \right) &=&\frac{\left( n+1\right) \left( n+2\alpha
+1\right) }{\left( 2n+2\alpha +1\right) \left( n+\alpha +1\right) }, \\
g_{n}\left( \alpha \right) &=&\frac{n+\alpha }{2n+2\alpha +1},
\end{eqnarray*}
and initial conditions $P_{0}^{\left( \alpha ,\alpha \right) }\left(
x\right) =1$ and $P_{1}^{\left( \alpha ,\alpha \right) }\left( x\right)
=\left( \alpha +1\right) x.$ It is easy to see that for $n \ge 2$,
\begin{equation}
x^{2}P_{n}^{\left( \alpha ,\alpha \right) }\left( x\right) =\widetilde{a}%
_{n}\left( \alpha \right) P_{n+2}^{\left( \alpha ,\alpha \right) }\left(
x\right) +\widetilde{b}_{n}\left( \alpha \right) P_{n}^{\left( \alpha
,\alpha \right) }\left( x\right) +\widetilde{g}_{n}\left( \alpha \right)
P_{n-2}^{\left( \alpha ,\alpha \right) }\left( x\right),  \label{eqrec2}
\end{equation}%
where $\widetilde{a}_{n}=a_{n}a_{n+1}$, $\widetilde{g}_{n}=g_{n}g_{n-1}$,
and $\widetilde{b}_{n}=a_{n}g_{n+1}+g_{n}a_{n-1}.$
Write $0 \le y := x^2$ and $u := 2n$. Note that since 
$P_{2 n}^{\left( \alpha ,\alpha \right) }\left( x\right)$ is even (cf. \cite[(4.1.3)]{Szeg39}) the function 
$r_u(y) := P_{2 n}^{\left( \alpha ,\alpha \right) }\left( \sqrt{y}\right)$ is a polynomial on $[0, \infty)$, and furthermore,
\begin{equation}
 y \ r_{u} \left( y \right) =
 \widetilde{a}_{u}\left( \alpha \right) r_{u+1} \left(
	y \right) +\widetilde{b}_{u}\left( \alpha \right)  
r_{u} \left( y  \right)  +\widetilde{g}_{u}\left( \alpha \right)
 r_{u - 1} \left(
	 y  \right).  \label{eqrec3}
\end{equation}

Next we need an estimate of the first positive root of $P_{2n}^{\left(
\alpha ,\alpha \right) }\left( x\right) .$ Using the work of \'{A}
. Elbert and D. Siafarikas (see \cite{ElSi99}) we can derive quickly the following result:

\begin{theorem} \label{Elbert}
Let $x_{2n,1}\left( \alpha \right) $ be the first positive zero of $
P_{2n}^{\left( \alpha ,\alpha \right) }\left( x\right) $, where $n\geq 3$ and $\alpha \geq -1/2.$ Then 
\begin{equation*}
x_{2n,1}\left( \alpha \right) \geq \frac{\pi }{4\sqrt{\left( \alpha +\frac{1
}{2}+n\right) \left( n+2\right) }}.
\end{equation*}

\end{theorem}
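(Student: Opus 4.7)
The plan is to invoke an explicit lower bound for the first positive zero of a symmetric Jacobi polynomial due to Elbert and Siafarikas, and then to massage the resulting estimate into the compact closed form above.

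First, I would pass to ultraspherical polynomials by setting $\lambda = \alpha + 1/2$; since $P_{2n}^{(\alpha,\alpha)}$ and the ultraspherical polynomial $C_{2n}^{(\lambda)}$ are proportional, they share their zeros, and in particular their smallest positive zero $x_{2n,1}(\alpha)$. The main result of Elbert--Siafarikas, which proceeds by a Sturm-type comparison with Bessel's equation, provides an inequality of the form
\begin{equation*}
x_{N,1}^{2}(\lambda) \;\geq\; \frac{j_{\lambda - 1/2,\,1}^{2}}{D(N, \lambda)},
\end{equation*}
with $j_{\nu,1}$ denoting the first positive zero of the Bessel function $J_{\nu}$ and $D(N,\lambda)$ an explicit expression polynomial in $N$ and $\lambda$. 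Specializing to $N = 2n$ and $\lambda = \alpha + 1/2$ reduces the problem to controlling the Bessel zero $j_{\alpha,1}$ from below and $D$ from above.

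Second, I would use the classical monotonicity of $\nu \mapsto j_{\nu,1}$ for $\nu \geq -1/2$ together with the explicit value $j_{-1/2,1} = \pi/2$, which follows from $J_{-1/2}(x) = \sqrt{2/(\pi x)}\cos x$. Since by hypothesis $\alpha \geq -1/2$, this yields $j_{\alpha,1}^{2} \geq \pi^{2}/4$, which will deliver the factor $\pi$ in the stated bound.

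The only remaining ingredient, which I expect to be the main (though routine) obstacle, is the algebraic verification
\begin{equation*}
D(2n, \alpha + 1/2) \;\leq\; 4 (n + \alpha + 1/2)(n+2) \qquad (n \geq 3,\ \alpha \geq -1/2).
\end{equation*}
The shift $n+2$ in the second factor and the constant $4$ out front are chosen precisely to absorb both the $\lambda$-dependence and the small-$n$ error terms appearing in $D$; the hypothesis $n \geq 3$ is what guarantees that the resulting quadratic-in-$n$ comparison holds with this clean right-hand side, avoiding boundary effects that arise for $n = 1, 2$ (where the bound can be verified by direct inspection if desired). Combining the three ingredients yields
\begin{equation*}
x_{2n,1}^{2}(\alpha) \;\geq\; \frac{\pi^{2}/4}{D(2n,\alpha+1/2)} \;\geq\; \frac{\pi^{2}}{16 (n + \alpha + 1/2)(n+2)},
\end{equation*}
and taking positive square roots produces the claim.
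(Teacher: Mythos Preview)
Your proposal has the right overall architecture (pass to ultraspherical polynomials with $\lambda=\alpha+\tfrac12$, invoke Elbert--Siafarikas, then do algebra), but the central step is not the one the paper uses, and as written it is a genuine gap.

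The result of Elbert--Siafarikas actually employed here is \emph{not} a Bessel-zero lower bound of the shape $x_{N,1}^{2}\ge j_{\lambda-1/2,1}^{2}/D(N,\lambda)$. What they prove (and what the paper quotes) is a monotonicity statement: the function
\[
f_{2n}(\lambda)=\sqrt{\lambda+\frac{8n^{2}+1}{8n+2}}\;y_{2n,1}(\lambda)
\]
is increasing on $(-\tfrac12,\infty)$. The paper then evaluates at $\lambda=0$, where $C_{2n}^{0}=T_{2n}$ and the smallest positive zero is explicitly $\sin(\pi/4n)$; the elementary bound $\sin(\pi/4n)\ge \pi/(4n+2)$ supplies the factor $\pi$, and two one-line algebraic inequalities,
\[
n\ge \frac{8n^{2}+1}{8n+2}\quad\text{and}\quad \frac{8n^{2}+1}{8n+2}\cdot\frac{1}{(4n+2)^{2}}\ge \frac{1}{16(n+2)},
\]
finish the job. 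No Bessel functions enter at all.

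By contrast, your sketch never specifies $D(N,\lambda)$, attributes to Elbert--Siafarikas a Sturm/Bessel comparison they do not prove in the cited paper, and defers the inequality $D(2n,\alpha+\tfrac12)\le 4(n+\alpha+\tfrac12)(n+2)$ as ``routine'' without a formula for $D$. Until you write down the exact comparison inequality you mean (with a reference that actually contains it) and carry out that algebra, the argument is incomplete. If you want to stay close to the paper's source, the cleaner route is the monotonicity-plus-Chebyshev argument above; it avoids Bessel zeros entirely and makes the role of the hypothesis $n\ge 3$ transparent in the final algebraic simplification.
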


\begin{proof}
	For $\alpha > - 1/2$ it is well known that $P_{2n}^{\left( \alpha ,\alpha \right) }\left(
x\right) $ is equal to a multiple of the Gegenbauer (or ultraspherical)
polynomial $C_{2n}^{\lambda }\left( x\right) $ of degree $2n$ with parameter 
$\lambda =\alpha +1/2.$ Denote the first positive zero of $C_{2n}^{\lambda }$
by $y_{2n,1}\left( \lambda \right) $, so $x_{2n,1}\left( \alpha \right)
=y_{2n,1}\left( \alpha +\frac{1}{2}\right) .$ In \cite[p. 32]{ElSi99}, the authors show
 that on $\left( 0,\infty \right)$, the function $f_{2n}$ defined by 
\begin{equation*}
f_{2n}\left( \lambda \right) :=\sqrt{\lambda +\frac{8n^{2}+1}{8n+2}}
 \ y_{2n,1}\left( \lambda \right)
 \ge
 \sqrt{\frac{8n^{2}+1}{8n+2}}
 \ y_{2n,1}\left( \lambda \right),
\end{equation*}
is increasing. 

By \cite[Theorem 6.21.1]{Szeg39}, as $\lambda \downarrow 0$, so $\alpha \downarrow -1/2$, 
we have $x_{2n,1}\left( \alpha \right) \uparrow x_{2n,1}\left( -1/2 \right)$. Since
$f_{2n}\left( \lambda \right)$ is increasing, we conclude that for every $\lambda > 0$,
\begin{equation*}
f_{2n}\left( \lambda \right) \geq 
\lim_{\lambda \downarrow 0}f_{2n}\left( \lambda \right) 
\ge 
\sqrt{\frac{8n^{2}+1
}{8n+2}} \ x_{2n,1}\left( -1/2 \right).
\end{equation*}
 Now by \cite[pg. 60]{Szeg39}, we have $P_{2n}^{\left( -1/2 , -1/2 \right) }\left(
 x\right) = c_{2n} T_{2n}\left( x\right) $, where $c_{2n}$ is a constant   depending only on $n$ and $T_{2n}\left( x\right) $  is the corresponding Chebyshev
polynomial  of the first kind. Since the zeros of $T_{2n}$
are given by $\cos \left( \frac{2k-1}{2n}\frac{\pi }{2}\right) $ for $%
k=1, \dots , 2n,$ we conclude that 
\begin{eqnarray*}
 x_{2n,1}\left( -1/2 \right) &=&\cos \left( \frac{2n-1}{2n}\frac{\pi }{2}\right)
=\sin \left( \frac{\pi }{2}-\frac{2n-1}{2n}\frac{\pi }{2}\right) \\
&=&\sin \frac{\pi }{4n}\geq \frac{\pi }{4n+2},
\end{eqnarray*}
where we have used that $\cos x=\sin \left( \frac{\pi }{2}-x\right)$,  and  for the last
inequality, \cite[Lemma 14]{RendAl22}.  Since 
\begin{equation*}
\sqrt{\alpha +\frac{1}{2}+\frac{8n^{2}+1}{8n+2}} \ y_{2n,1}\left( \alpha +\frac{1
}{2}\right) 
\ge
\sqrt{\frac{8n^{2}+1
	}{8n+2}} \ x_{2n,1}\left( -1/2 \right),
\end{equation*}
 it follows that 
\begin{equation*}
\left( \alpha +\frac{1}{2}+\frac{8n^{2}+1}{8n+2}\right) \left(
x_{2n,1}\left( \alpha \right) \right) ^{2}\geq \frac{8n^{2}+1}{8n+2}\frac{
\pi ^{2}}{\left( 4n+2\right) ^{2}}.
\end{equation*}
From 
$$
n\geq \frac{8n^{2}+1}{8n+2} \mbox{ \ and  \  } \frac{8n^{2}+1}{8n+2}\frac{1}{
\left( 4n+2\right) ^{2}}\geq \frac{1}{16\left( n+2\right) }
$$
 we obtain the
estimate 
\begin{equation*}
\left( \alpha +\frac{1}{2}+n\right) \left( x_{2n,1}\left( \alpha \right)
\right) ^{2}\geq \frac{1}{16\left( n+2\right) }\pi ^{2}.
\end{equation*}
\end{proof}

\section{Proof of Theorem \ref{arbdimB}}

Denote the unit sphere by 
$
\mathbb{S}^{d-1}=\left\{ \theta \in \mathbb{R}^{d}:\left\vert \theta
\right\vert =1\right\},
$
and its surface area measure by $d\theta $. The  $L^{2}(\mathbb{S}^{d-1})$  inner product and its associated norm, are defined in the usual way:
\begin{equation}
\left\langle f,g\right\rangle _{L^{2}(\mathbb{S}^{d-1})}:=\int_{\mathbb{S}%
^{d-1}}f\left( \theta \right) \overline{g\left( \theta \right) }d\theta 
\text{ and }\left\Vert f\right\Vert _{L^{2}(\mathbb{S}^{d-1})}=\sqrt{%
\left\langle f,f\right\rangle _{L^{2}(\mathbb{S}^{d-1})}}.
\label{eqprodspher}
\end{equation}

We shall need some results from the theory of spherical harmonics. It is well known that one can
construct \emph{orthogonal} spherical harmonics by induction over the
dimension $d$. In order to emphasize the dependence on $d$, we shall denote an orthogonal basis of the space $\mathcal{H}%
_{k}\left( \mathbb{R}^{d}\right)$ of all homogeneous harmonic polynomials of degree $k$ in $d$
variables, by 
\begin{equation*}
Y_{k,l}^{d}\left( x_{1},\dots,x_{d}\right) \text{ for }l=1,\dots,a_{k}^{\left(
d\right) },
\end{equation*}%
where we have written $a_{k}^{\left( d\right) }:=\dim \mathcal{H}_{k}\left( 
\mathbb{R}^{d}\right) .$ We also use  $x =\left(
x_{1},\dots ,x_{d}\right) $ and $\left\vert x\right\vert =\sqrt{
x_{1}^{2}+ \cdots +x_{d}^{2}}.$
The following result can be found, for instance, in \cite[p.
461]{AAR99} or in \cite{Frya91}.

\begin{theorem}
\label{ThmSphRec} Let $d\geq 2$, let $k \in \N$, and  for each $s=0,1,2,\dots,k$, let 
$$\{
Y_{s,l}^{d-1}: l=1,\dots, a_{s}^{\left( d-1\right) }\}
$$ be an orthogonal basis of $
\mathcal{H}_{s}\left( \mathbb{R}^{d-1}\right)$.
Then the system of polynomials given by
\begin{equation}
Y_{k,\left( s,l\right) }^{d}\left( x\right) 
:=
Y_{s,l}^{d-1}\left(
x_{1},\dots,x_{d-1}\right) \cdot \left\vert x\right\vert ^{k-s}P_{k-s}^{\left(
s+\left( d-3\right) /2, s+\left( d-3\right) /2\right) }\left( \frac{x_{d}}{%
\left\vert x\right\vert }\right)   \label{eqintFF},
\end{equation}
where $s=0,1,2,\dots ,k$ and $l=1,\dots,a_{s}^{(d-1)}$, forms an orthogonal basis
of $\mathcal{H}_{k}\left( \mathbb{R}^{d}\right) $.
\end{theorem}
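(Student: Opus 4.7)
The plan is to verify three properties of the family $\{Y_{k,(s,l)}^d\}$: (i) each element belongs to $\mathcal{H}_k(\mathbb{R}^d)$, (ii) distinct elements are $L^2(\mathbb{S}^{d-1})$-orthogonal, and (iii) the total cardinality matches $a_k^{(d)}=\dim\mathcal{H}_k(\mathbb{R}^d)$.

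For (i), homogeneity of degree $k$ is routine: since $P_{k-s}^{(\alpha,\alpha)}$ has the same parity as $k-s$, every monomial appearing in $|x|^{k-s}P_{k-s}^{(\alpha,\alpha)}(x_d/|x|)$ has the form $c_j\, x_d^{\,j}|x|^{k-s-j}$ with $k-s-j$ even, so the second factor is a polynomial homogeneous of degree $k-s$ in $(x_1,\ldots,x_d)$; multiplying by $Y_{s,l}^{d-1}$, which is homogeneous of degree $s$, yields a polynomial homogeneous of degree $k$. For harmonicity I set $r:=\sqrt{x_1^2+\cdots+x_{d-1}^2}$ and $f(r,x_d):=|x|^{k-s}P_{k-s}^{(\alpha,\alpha)}(x_d/|x|)$. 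A Leibniz computation using Euler's identity $x'\cdot\nabla Y_{s,l}^{d-1}=sY_{s,l}^{d-1}$ and the harmonicity $\Delta_{d-1}Y_{s,l}^{d-1}=0$ yields
\begin{equation*}
\Delta\bigl(Y_{s,l}^{d-1}\cdot f\bigr)=Y_{s,l}^{d-1}\Bigl[f_{rr}+\frac{2s+d-2}{r}f_r+f_{x_dx_d}\Bigr].
\end{equation*}
Changing to the variables $\rho:=|x|$ and $t:=x_d/|x|$, the bracketed expression becomes a second-order ODE in $t$ for $R(t):=P_{k-s}^{(\alpha,\alpha)}(t)$; with the specific choice $\alpha=s+(d-3)/2$ it reduces to the Jacobi differential equation $(1-t^2)R''(t)-2(\alpha+1)tR'(t)+(k-s)(k-s+2\alpha+1)R(t)=0$, so the bracket vanishes.

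For (ii) I parametrize $\theta\in\mathbb{S}^{d-1}$ as $\theta=(\sqrt{1-t^2}\,\phi,\,t)$ with $\phi\in\mathbb{S}^{d-2}$ and $t\in[-1,1]$, so $d\theta=(1-t^2)^{(d-3)/2}d\phi\,dt$. Homogeneity of $Y_{s,l}^{d-1}$ gives on the sphere $Y_{s,l}^{d-1}(\sqrt{1-t^2}\,\phi)=(1-t^2)^{s/2}Y_{s,l}^{d-1}(\phi)$, so the inner product factors as
\begin{equation*}
\bigl\langle Y_{k,(s,l)}^d,Y_{k,(s',l')}^d\bigr\rangle_{L^2(\mathbb{S}^{d-1})}=\Bigl(\int_{\mathbb{S}^{d-2}}Y_{s,l}^{d-1}(\phi)\overline{Y_{s',l'}^{d-1}(\phi)}\,d\phi\Bigr)\,I(k,s,s',d),
\end{equation*}
for some one-variable integral $I$. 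When $s\neq s'$, the $\phi$-factor vanishes because spherical harmonics of different degrees on $\mathbb{S}^{d-2}$ are orthogonal (as eigenfunctions of the Laplace--Beltrami operator with distinct eigenvalues). When $s=s'$ and $l\neq l'$, it vanishes by the assumed orthogonality of the inductive basis $\{Y_{s,l}^{d-1}\}_l$ of $\mathcal{H}_s(\mathbb{R}^{d-1})$.

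Finally, (iii) is a dimension count: the family contains $\sum_{s=0}^{k}a_s^{(d-1)}$ polynomials, which are nonzero and mutually orthogonal by (ii), hence linearly independent. The classical identity $\sum_{s=0}^{k}a_s^{(d-1)}=a_k^{(d)}$, verifiable directly from the explicit formula $a_k^{(d)}=\binom{k+d-1}{d-1}-\binom{k+d-3}{d-1}$, then promotes linear independence to a basis. The main obstacle is the harmonicity verification in (i): the exact parameter $\alpha=s+(d-3)/2$ in the statement is forced precisely by the requirement that the ODE produced by the Laplacian in $(\rho,t)$ coincide with the Jacobi equation.
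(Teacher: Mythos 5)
Your plan is essentially correct, but note that the paper does not prove this theorem at all: it is quoted as a known result, with references to Andrews--Askey--Roy \cite[p.~460]{AAR99} and to Fryant \cite{Frya91}, so there is no internal proof to compare against. What you have written is the standard textbook argument, and its three pillars are sound: (i) the parity of $P_{k-s}^{(\alpha,\alpha)}$ does make $\left\vert x\right\vert ^{k-s}P_{k-s}^{(\alpha,\alpha)}(x_d/\left\vert x\right\vert)$ a homogeneous polynomial of degree $k-s$; your Leibniz formula $\Delta(Y_{s,l}^{d-1}f)=Y_{s,l}^{d-1}\bigl[f_{rr}+\tfrac{2s+d-2}{r}f_r+f_{x_dx_d}\bigr]$ is correct; and the bracket does vanish exactly when $R=P_{k-s}^{(\alpha,\alpha)}$ with $\alpha=s+(d-3)/2$. (ii) The factorization of the inner product through $d\theta=(1-t^{2})^{(d-3)/2}d\phi\,dt$ and the homogeneity factor $(1-t^{2})^{s/2}$ is the right mechanism, and (iii) the branching identity $\sum_{s=0}^{k}a_{s}^{(d-1)}=a_{k}^{(d)}$ together with orthogonality of nonzero elements does finish the proof, since homogeneous polynomials of equal degree are determined by their restrictions to $\mathbb{S}^{d-1}$.

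Two small points deserve attention if you flesh this out. First, the harmonicity step is the only real computation and you assert rather than perform it; a clean way to close it is to observe that the bracketed operator is precisely the Laplacian of $\mathbb{R}^{2s+d}$ acting on axially symmetric functions of $(r,x_d)$, so $f=\rho^{\,k-s}R(x_d/\rho)$ is annihilated exactly when $\rho^{\,k-s}R(x_d/\rho)$ is an axially symmetric homogeneous harmonic in dimension $2s+d$, i.e.\ when $R$ is the Gegenbauer polynomial $C_{k-s}^{\lambda}$ with $\lambda=s+(d-2)/2$, which is the stated Jacobi parameter $\alpha=\lambda-\tfrac12=s+(d-3)/2$; this replaces the chain-rule computation in $(\rho,t)$ by a known classification. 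Second, for $d=2$ the sphere $\mathbb{S}^{d-2}=\mathbb{S}^{0}$ is a two-point set, so the Laplace--Beltrami eigenfunction argument for orthogonality of different degrees is vacuous there; in that case only $s=0,1$ contribute (since $a_{s}^{(1)}=0$ for $s\geq 2$) and the orthogonality of $1$ and $x_{1}$ on $\{\pm 1\}$ is checked directly. Neither point is a genuine gap, only detail to be supplied.
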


 Using the abreviation $\alpha _{s}:=s+\left( d-3\right) /2$, for $k\geq 1$ identity (\ref{eqintFF}) becomes 
\begin{equation*}
Y_{k,\left( s,l\right) }^{d}\left( x\right) 
=
Y_{s,l}^{d-1}\left(
x_{1},\dots ,x_{d-1}\right) \cdot \left\vert x\right\vert
^{k-s}P_{k-s}^{\left( \alpha _{s},\alpha _{s}\right) }\left( \frac{x_{d}}{
\left\vert x\right\vert }\right).
\end{equation*}
Note  that the same factor $Y_{s,l}^{d-1}\left(
x_{1},\dots,x_{d-1}\right)$ appears in the preceding expression for different values of $k$, when $\left\vert x\right\vert =1$. 
The recurrence relation (\ref{eqrec2}) together with (\ref{eqintFF}) imply that for $k\ge 2$,
\begin{equation*}
x_{d}^{2}Y_{k,\left( s,l\right) }^{d}\left( x\right) =\widetilde{a}
_{k-s}\left( \alpha _{s}\right) Y_{k+2,\left( s,l\right) }^{d}\left(
x\right) +\widetilde{b}_{k-s}\left( \alpha _{s}\right) Y_{k,\left(
s,l\right) }^{d}\left( x\right) +\widetilde{g}_{k-s}\left( \alpha
_{s}\right) Y_{k-2,\left( s,l\right) }^{d}\left( x\right).
\end{equation*}
\color{red}  \color{black}
  Define the normalized spherical
harmonic by
\begin{equation*}
\widetilde{Y}_{k,\left( s,l\right) }^{d}=d_{k,\left( s,l\right) }\cdot
Y_{k,\left( s,l\right) }^{d}\text{ where }d_{k,\left( s,l\right)
}:=\left\Vert Y_{k,\left( s,l\right) }^{d}\right\Vert _{\mathbb{S}
^{d-1}}^{-1}.
\end{equation*}
It follows that 
\begin{eqnarray}
x_{d}^{2}\widetilde{Y}_{k,\left( s,l\right) }^{d}\left( x\right) &
=
&
\widetilde{a}_{k-s}\left( \alpha _{s}\right) \frac{d_{k,\left( s,l\right) }}{
d_{k+2,\left( s,l\right) }} 
\widetilde{Y}_{k+2,\left( s,l\right) }^{d}\left(
x\right) +\widetilde{b}_{k-s}\left( \alpha _{s}\right) \widetilde{Y}
_{k,\left( s,l\right) }^{d}\left( x\right)  \label{eqrell} \\
&&
+ \  \widetilde{g}_{k-s}\left( \alpha _{s}\right) 
\frac{d_{k,\left( s,l\right) }}{d_{k-2,\left( s,l\right) }}
\widetilde{Y}_{k-2,\left( s,l\right)}^{d}\left( x\right)  
\label{eqrell2}.
\end{eqnarray}

The following result, cf. \cite[Lemma 12]{RendAl22}, shows that to prove Theorem \ref{arbdimB} we only need
to  consider homogeneous polynomials $f_{2m}$ of even degree.

\begin{lemma}
\label{Lemevenodd} Let $P_{2k}$ be a homogeneous polynomial of degree $2k>0$.
 Suppose that for each $m\in \mathbb{N}$, there exists a constant $
C_{2m}>0$ such that for all homogeneous polynomials $f_{2m}$ of degree $2m$, we have
\begin{equation*}
\left\langle P_{2k}f_{2m},f_{2m}\right\rangle _{L^{2}(\mathbb{S}^{d-1})}\geq
C_{2m}\left\langle f_{2m},f_{2m}\right\rangle _{L^{2}(\mathbb{S}^{d-1})}.
\end{equation*}
Then for all homogeneous polynomials $f_{2m+1}$ of degree $2m+1$, 
\begin{equation*}
\left\langle P_{2k}f_{2m+1},f_{2m+1}\right\rangle _{L^{2}(\mathbb{S}
^{d-1})}\geq C_{2m+2}\left\langle f_{2m+1},f_{2m+1}\right\rangle _{L^{2}(
\mathbb{S}^{d-1})}.
\end{equation*}

\end{lemma}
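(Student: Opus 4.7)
The plan is to reduce the odd-degree case to the even-degree case by multiplying by coordinate functions. Given an arbitrary homogeneous polynomial $f_{2m+1}$ of degree $2m+1$, I would set, for each $i = 1, \dots, d$,
\[
g_i := x_i \, f_{2m+1},
\]
which is homogeneous of degree $2m+2 = 2(m+1)$. The hypothesis then applies to each $g_i$ with the constant $C_{2m+2}$, yielding
\[
\langle P_{2k} g_i, g_i\rangle_{L^2(\mathbb{S}^{d-1})} \geq C_{2m+2} \, \langle g_i, g_i\rangle_{L^2(\mathbb{S}^{d-1})}.
\]

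Next, I would sum these $d$ inequalities over $i$ and use the key fact that $\sum_{i=1}^{d} x_i^2 = 1$ on $\mathbb{S}^{d-1}$. On the left-hand side, integration over the sphere gives
\[
\sum_{i=1}^{d} \int_{\mathbb{S}^{d-1}} P_{2k}(\theta) \, \theta_i^2 \, |f_{2m+1}(\theta)|^2 \, d\theta = \int_{\mathbb{S}^{d-1}} P_{2k}(\theta) \, |f_{2m+1}(\theta)|^2 \, d\theta,
\]
which is precisely $\langle P_{2k} f_{2m+1}, f_{2m+1}\rangle_{L^2(\mathbb{S}^{d-1})}$. An identical telescoping on the right-hand side gives $C_{2m+2} \, \langle f_{2m+1}, f_{2m+1}\rangle_{L^2(\mathbb{S}^{d-1})}$, completing the proof.

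There is no real obstacle here; the only subtlety is noticing that the hypothesis must be applied at level $m+1$ rather than $m$, because multiplication by $x_i$ raises the degree from $2m+1$ to $2m+2$. This is consistent with the $C_{2m+2}$ appearing in the conclusion. The argument is essentially a polarization-style averaging over the coordinate directions, made possible by the spherical identity $\sum_i x_i^2 = 1$.
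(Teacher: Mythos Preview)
Your proof is correct. The paper does not actually reprove this lemma; it merely cites \cite[Lemma~12]{RendAl22}. Your argument---multiply $f_{2m+1}$ by each coordinate $x_i$, apply the even-degree hypothesis at level $2m+2$, and sum using $\sum_i x_i^2 = 1$ on $\mathbb{S}^{d-1}$---is exactly the standard trick and is the proof given in the cited reference.
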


Let us recall Theorem \ref{arbdimB}.

\begin{theorem} \label{arbdimBB}
	For all homogeneous polynomials $
	f_{m}$ of degree $m$ in $d$ variables, 
	and each $j = 1, \dots , d$, we have
	\begin{equation*}
		\left\langle x_{j}^{2}f_{m},f_{m}\right\rangle _{L^{2}\left( \mathbb{S}%
			^{d-1}\right) }
		\geq
		\frac{\pi ^{2}}{4\left( m+ 2 d + 1 \right)^{2}}
		\left
		\langle
		f_{m},f_{m}\right\rangle _{L^{2}\left( \mathbb{S}^{d-1}\right) }.
	\end{equation*}
\end{theorem}

\begin{proof} Without loss of generality we set $j = d$.
  By the preceding Lemma it suffices
to show that for all even indices we can take
$$C_{2m} := \frac{\pi^{2}}{4\left( 2m+ 2 d\right) ^{2}}.
$$

 By the Gauss
decomposition (cf. Theorem 5.5 in \cite{ABR92} or Theorem 5.7 in the 2001 edition) there exist homogeneous
harmonic polynomials $h_{2k}$ of degree $2k$ for $k=0,\dots ,m$, such that 
\begin{equation}
f_{2m} (x) =\sum_{k=0}^{m}  \left\vert x\right\vert ^{2m-2k} h_{2k}(x)\text{.}
\label{eqAlmansi1}
\end{equation}
 Each harmonic polynomial $h_{2k}$
of degree $2k$ for $k=0,\dots ,m$ is a linear combination of the orthonormal
spherical harmonics $\widetilde{Y}_{2k,\left( s,l\right) }^{d}$ with $
s=0,1,2,\dots ,2k$ and $l=1,\dots ,a_{s}^{(d-1)}$. Thus, we can write 
\begin{equation*}
h_{2k}=\sum_{s=0}^{2k}\sum_{l=1}^{a_{s}^{\left( d-1\right) }}c_{k,s,l}^{d}
\widetilde{Y}_{2k,\left( s,l\right) }^{d}
\end{equation*}
for suitable complex coefficients $c_{k,s,l}^{d}.$ \color{red} \color{black}

Recall that $\lfloor x \rfloor$ denotes the integer part of $x$. By reordering it is easy
to see that 
\begin{equation*}
f_{2m} (x) 
=
\sum_{k=0}^{m} \left\vert x\right\vert
^{2m-2k} h_{2k} (x)
=
\sum_{s=0}^{2m}\sum_{l=1}^{a_{s}^{\left( d-1\right) }}\sum_{k=\left\lfloor
\frac{s+1}{2}\right\rfloor }^{m}c_{k,s,l}^d\left\vert x\right\vert ^{2m-2k}%
\widetilde{Y}_{2k,\left( s,l\right) }^{d}  (x).
\end{equation*}%
By orthogonality 
\begin{equation}
\left\langle f_{2m},f_{2m}\right\rangle _{L^2(\mathbb{S}^{d-1})}=\sum_{s=0}^{2m}%
\sum_{l=1}^{a_{s}^{\left( d-1\right) }}\sum_{k=\left\lfloor \frac{s+1}{2}\right\rfloor
}^{m}\left\vert c_{k,s,l}^d\right\vert ^{2}  \label{eqF1}
\end{equation}
(since we are integrating over the unit sphere).
Now $\left\langle x_{d}^{2}f_{2m},f_{2m}\right\rangle _{L^2(\mathbb{S}^{d-1})}$
is equal to 
\begin{equation*}
\sum_{s_{1}=0}^{2m}\sum_{l_{1}=1}^{a_{s_1}^{\left( d-1\right) }}\sum_{k_{1}= 
\left\lfloor \frac{s_1+1}{2}\right\rfloor }^{m}\sum_{s_{2}=0}^{2m}\sum_{l_{2}=1}^{a_{s_2}^{
\left( d-1\right) }}\sum_{k_{2}=\left\lfloor \frac{s_2+1}{2}\right\rfloor
}^{m}c_{k_{1},s_{1},l_{1}}^d\overline{c_{k_{2},s_{2},l_{2}}^d}\left\langle
x_{d}^{2}\widetilde{Y}^d_{2k_{1},\left( s_{1},l_{1}\right) },\widetilde{Y}^d
_{2k_{2},\left( s_{2},l_{2}\right) }\right\rangle _{L^2(\mathbb{S}^{d-1})},
\end{equation*}
so by orthogonality (using the relation (\ref{eqrell}-\ref{eqrell2}) for $
x_d^2 \widetilde{Y}^d_{2k,\left( s,l\right) }$)
we obtain    
\begin{equation}
\left\langle x_{d}^{2}f_{2m},f_{2m}\right\rangle _{L^2(\mathbb{S}^{d-1})}=\sum_{s=0}^{2m}\sum_{l=1}^{a_{s}^{\left( d-1\right) }}\sum_{k_{1}=
\left\lfloor \frac{s+1}{2}\right\rfloor }^{m}\sum_{k_{2}=\left\lfloor \frac{s+1}{2}\right\rfloor
}^{m}c^d_{k_{1},s,l}\overline{c^d_{k_{2},s,l}}A_{k_{1},k_{2}}\left( s,l\right)
\label{eqF2}
\end{equation}%
where 
\begin{equation*}
A_{k_{1},k_{2}}\left( s,l\right) :=\left\langle x_{d}^{2}\widetilde{Y}^d
_{2k_{1},\left( s,l\right) },\widetilde{Y}^d_{2k_{2},\left( s,l\right)
}\right\rangle_{L^2(\mathbb{S}^{d-1})}.
\end{equation*}
In view of (\ref{eqF1}) and (\ref{eqF2}), it suffices to show that for each $
s=0,\dots , 2m$ and each $l=1,\dots ,a_{s}^{\left( d-1\right) }$,
\begin{equation}
\sum_{k_{1}=\left\lfloor \frac{s+1}{2}\right\rfloor }^{m}\sum_{k_{2}=\left\lfloor \frac{s+1}{2}%
\right\rfloor }^{m}c^d_{k_{1},s,l}\overline{c^d_{k_{2},s,l}}A_{k_{1},k_{2 }}\left(
s,l\right) 
\geq
 \frac{\pi ^{2}}{4\left( 2m + 2d  \right) ^{2}}\sum_{k=\left\lfloor 
\frac{s+1}{2}\right\rfloor }^{m}\left\vert c^d_{k,s,l}\right\vert ^{2}.  \label{eqF3}
\end{equation}
 The integer $s$ is either even or odd,
so we can write $s=2\sigma +\delta $ with $\delta \in \left\{ 0,1\right\}.$ 
Then 
\begin{equation*}
\left\lfloor \frac{s+1}{2}\right\rfloor =\left\lfloor \frac{2\sigma +1+\delta }{2}\right\rfloor
=\sigma +\delta .
\end{equation*}%
Let us define the symmetric $\left( m-\sigma -\delta +1\right) \times \left(
m-\sigma -\delta +1\right) $-matrix 
\begin{equation*}
A_{m}\left( s,l\right) :=\left( A_{k_{1},k_{2}}\left( s,l\right) \right)
_{k_{1},k_{2}=\sigma +\delta ,\dots , m}.
\end{equation*}
Writing $c^t = \left(c^d_{\left\lfloor \frac{s +1}{2}\right\rfloor, s,l}, \dots,  c^d_{m,s,l}\right)$, it is clear that the left hand side of (\ref{eqF3}) equals 
$$
c^t \ A_m(s,l) \ \overline{c},
$$
so it is enough to show that  the  smallest eigenvalue $\lambda _{m, s,l}^{\ast }$  of
 $A_m \left( s,l\right) $
satisfies the inequality 
$$
\lambda _{m, s,l}^{\ast }
\ge
\pi ^{2}/4\left( 2m+2d\right) ^{2}.
$$
Since $A_{m}\left( 2\sigma +1,l\right) $ is the submatrix of $A_{m}\left(
2\sigma ,l\right)$ obtained by deleting the first row and the first column, its lowest eigenvalue is at least as large as $\lambda _{m, 2\sigma,l}^{\ast }$. Thus, it suffices to prove (\ref{eqF3}) for $
A_{m}\left( 2\sigma ,l\right) $, or equivalently, for  $\delta =0.$

We now use the recurrence relation (\ref{eqrell} - \ref{eqrell2}) (with $2 k$ 
instead of  $k$)  
\begin{eqnarray*}
x_{d}^{2}\widetilde{Y}_{2k,\left( s,l\right) }^{d}\left( x\right) &=&%
\widetilde{a}_{2k-s}\left( \alpha _{s}\right) \frac{d_{2k,\left( s,l\right) }%
}{d_{2k+2,\left( s,l\right) }}\widetilde{Y}_{2k+2,\left( s,l\right)
}^{d}\left( x\right) +\widetilde{b}_{2k-s}\left( \alpha _{s}\right) 
\widetilde{Y}_{2k,\left( s,l\right) }^{d}\left( x\right) \\
&&+\widetilde{g}_{2k-s}\left( \alpha _{s}\right) \frac{d_{2k,\left(
s,l\right) }}{d_{2k-2,\left( s,l\right) }}\widetilde{Y}_{2k-2,\left(
s,l\right) }^{d}\left( x\right) .
\end{eqnarray*}

From the orthonormality relations we see that 
\begin{equation*}
A_{k,k}\left( s,l\right) =\left\langle x_{d}^{2}\widetilde{Y}^d_{2k,\left(
s,l\right) },\widetilde{Y}^d_{2k,\left( s,l\right) }\right\rangle_{L^2(\mathbb{S}^{d-1})}
=\widetilde{b}_{2k-s}\left( \alpha _{s}\right), 
\end{equation*}
\begin{equation*}
A_{k,k+1}\left( s,l\right) =\left\langle x_{d}^{2}\widetilde{Y}^d_{2k,\left(
s,l\right) },\widetilde{Y}^d_{2k+2,\left( s,l\right) }\right\rangle _{L^2(\mathbb{S}^{d-1})}
=
\widetilde{a}_{2k-s}\left( \alpha _{s}\right) \frac{d_{2k,\left(
s,l\right) }}{d_{2k+2,\left( s,l\right) }},
\end{equation*}
and 
\begin{equation*}
A_{k,k-1}\left( s,l\right) =\left\langle x_{d}^{2}\widetilde{Y}^d_{2k,\left(
s,l\right) },\widetilde{Y}^d_{2k-2,\left( s,l\right) }\right\rangle _{L^2(\mathbb{S}^{d-1})}
=
\widetilde{g}_{2k-s}\left( \alpha _{s}\right) \frac{d_{2k,\left(
s,l\right) }}{d_{2k-2,\left( s,l\right) }},
\end{equation*}
for $k=\sigma ,\dots , m$ and $s=2\sigma$, with all the other entries being equal to zero.  It follows that $
A_{m}\left( s,l\right) $ is an $\left( m-\sigma +1\right) \times \left(
m-\sigma +1\right) -$matrix of the form 
\begin{equation*}
\left( 
\begin{array}{cccc}
\widetilde{b}_{0} & \widetilde{a}_{0}\frac{d_{2\sigma ,\left( s,l\right) }}{%
d_{2\sigma +2,\left( s,l\right) }} &  &  \\ 
\widetilde{g}_{2}\frac{d_{2\sigma +2,\left( s,l\right) }}{d_{2\sigma ,\left(
s,l\right) }} & \widetilde{b}_{2} & \ddots &  \\ 
& \ddots & \ddots & \widetilde{a}_{2m - 2-2\sigma }\frac{d_{2m-2,\left( s,l\right) }}{d_{2m,\left( s,l\right)
}} \\ 
&  & \widetilde{g}_{2m-2\sigma }\frac{
d_{2m,\left( s,l\right) }}{d_{2m-2,\left( s,l\right) }} & \widetilde{b}_{2m-2\sigma }
\end{array}%
\right),
\end{equation*}
  We recall some known facts about tridiagonal $n\times n$-matrices of the form 
\begin{equation*}
J_{n}=\left( 
\begin{array}{cccc}
\beta _{0} & \alpha _{0} &  &  \\ 
\gamma _{1} & \beta _{1} & \ddots &  \\ 
& \ddots & \ddots & \alpha _{n-2} \\ 
&  & \gamma _{n-1} & \beta _{n-1}%
\end{array}
\right). 
\end{equation*}%
where all the entries $\alpha _{k}$ and $\gamma _{k}$ are non-zero. In order to compute
the characteristic polynomial $\det \left( J_{n}-\lambda I_n \right)$
 of $J_{n}$,  we define  a sequence of polynomials $p_{0}\left( x\right) :=1$, 
 $
 p_{1}\left( x\right) :=\left( x-\beta _{0}\right) /\alpha _{0}$, 
 and for $0<k<n$,   we use the recurrence relation  
 \begin{equation}
 	\label{genrec}
 	xp_{k}\left( x\right) =\gamma _{k}p_{k-1}\left( x\right) +\beta
 	_{k}p_{k}\left( x\right) +\alpha _{k}p_{k+1}\left( x\right).
 \end{equation}
 Then an inductive argument for $n\ge 1$, together with the  expansion of the determinant  along the last column, yields that $p_{n}\left( x\right) $ is a constant multiple
of the characteristic polynomial of $J_{n}$, 
and more specifically,  that
\begin{equation*}
\det \left( J_{n}-\lambda I_n   \right) =\left( -1\right) ^{n}\alpha
_{0}\dots \alpha _{n-2}p_{n}\left( \lambda \right) .
\end{equation*}%
Thus the eigenvalues of $J_{n}$ are the zeros of the polynomials $
p_{n}\left( x\right)$ defined by (\ref{genrec}).  Next we consider the special form $
A_{m}\left( s,l\right) $ of  $J_n$ that appears in our argument, with an additional simplification.  Replacing $p_{k}$ with $
q_{k}:=d_{k}p_{k}$ (where $d_{k}\neq 0)$ in
(\ref{genrec}) we obtain the recurrence relation 
\begin{equation*}
xq_{k}\left( x\right) =\alpha _{k}\frac{d_{k}}{d_{k+1}}q_{k+1}\left(
x\right) +\beta _{k}q_{k}\left( x\right) +\gamma _{k}\frac{d_{k}}{d_{k-1}}
q_{k-1}\left( x\right),
\end{equation*}
with the initial conditions transformed to $q_{0}\left( x\right) =d_{0}$
and $q_{1}\left( x\right) =d_{1}\left( x-\beta _{0}\right) /\alpha _{0}.$

It follows that the matrix $A_{m}\left( s,l\right) $, for $s=2\sigma $, has the
same eigenvalues as the matrix
\begin{equation*}
\left( 
\begin{array}{cccc}
\widetilde{b}_{0} & \widetilde{a}_{0} &  &  \\ 
\widetilde{g}_{2} & \widetilde{b}_{2} & \ddots  &  \\ 
& \ddots  & \ddots  & \widetilde{a}_{2m- 2 - 2\sigma } \\ 
&  & \widetilde{g}_{2m-2\sigma } & \widetilde{b}_{2m-2\sigma }
\end{array}
\right). 
\end{equation*}%
Next we replace the even indices $2n$ appearing
in the preceding matrix with $u = 2n$, 
and we recognize it as the Jacobi matrix
whose eigenvalues are computed via the polynomials appearing in the recurrence relation (\ref{eqrec3}). 
Denote  by  $y_u$ the first positive zero  of $r_u (y) =
P_{2n}^{\left( \alpha ,\alpha \right) } \left( \sqrt{y}\right),$ 
with $n =  m-\sigma +1$ and $\alpha
=s+\left( d-3\right) /2.$  Then  the smallest eigenvalue $\lambda _{m, s,l}^{\ast }$  of $
A_{m}\left( s,l\right) $ is $y_u$.  
Note that $\sigma $ ranges from $
0$ to $m$, so $n =m-\sigma +1$ ranges from $1$ to $m+1.$ Furthermore,  
\begin{equation*}
\alpha =s+\left( d-3\right) /2=2\left( m+1-n\right) +\left( d-3\right) /2.
\end{equation*}
By Theorem \ref{Elbert},
\begin{equation*}
\left( \alpha +\frac{1}{2}+n\right) \left( n+2\right) y_u\geq \frac{\pi ^{2}}{16}.
\end{equation*}
For $n=m-\sigma +1$ we estimate     
\begin{eqnarray*}
\left( \alpha +\frac{1}{2}+n \right) \left( n+2\right)  &\leq
&\max_{n=1,\dots , m+1}\left( 2\left( m+1-n\right) +\left( d-3\right) /2 + 1/2 +    n \right)
\cdot \left( n+2\right)  \\
&\leq & \left( m + d \right)^{2},
\end{eqnarray*}
using the fact that $d \ge 2$, which in particular entails that for every $\sigma \ge 0$ we have $3 \sigma - \sigma^2 - (d \sigma)/2 \le 1$. 

It follows that  
\begin{equation*}
\lambda _{m, s,l}^{\ast } 
=
 y_{u} 
\geq 
\frac{\pi ^{2}}{16\left( m+ d\right)^{2}}.
\end{equation*}
\end{proof}

\section{Proof of Theorem \ref{arbdimParCyl}}

The
techniques of proof used here are based on the Fischer decomposition of polynomials, cf. \cite{RendAl22},  a framework that allows a unified treatment of all nonhyperbolic quadratic hypersurfaces.
We use the special case of \cite[Theorem 1]{RendAl22} where  $\alpha = 2$ and $k = 1$. 

\begin{theorem}
\label{ThmMain1} Let $P_{2}$ be a homogeneous polynomial of degree $2$
such that there exist $C>0$ and $D>0$ with 
\begin{equation}
\left\langle P_{2}f_{m},f_{m}\right\rangle _{L^{2}(\mathbb{S}^{d-1})}\geq 
\frac{1}{C\left( m+D\right) ^{2}}\left\langle f_{m},f_{m}\right\rangle
_{L^2(\mathbb{S}^{d-1})}  \label{eqmainassumption}
\end{equation}
for all homogeneous polynomials $f_{m}$ of degree $m.$  Let 
$q = P_2 + P_1 + P_0$, where $P_i$ denotes the homogeneous part of $q$ of degree $i$. Set 
$\beta =1$ if $P_1 \ne 0$ and $\beta = 0$ otherwise. 
 Then for every entire function $f$ of order $\rho <\left(
2-\beta \right) /2 $, there exist entire functions $s$ and $r$ of
order $\leq \rho $ such that 
\begin{equation*}
f= q \cdot s+r\text{ and }\Delta r=0.
\end{equation*}
\end{theorem}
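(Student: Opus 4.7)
The plan is to construct $s$ and $r$ by matching homogeneous components in $f = q s + r$ degree by degree, using the coercivity of $P_2$ furnished by hypothesis (\ref{eqmainassumption}) to get quantitative norm bounds, and then passing to the limit from polynomial truncations of $f$.

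First I would expand $f = \sum_{m \ge 0} f_m$ in homogeneous parts ($f_m \in \mathcal{P}_m$), and seek $s = \sum_k s_k$ and $r = \sum_m r_m$ with $r_m \in \mathcal{H}_m$. Matching homogeneous components in $f = (P_2 + P_1 + P_0) s + r$ leads to the coupled system
\[
f_m = P_2\, s_{m-2} + P_1\, s_{m-1} + P_0\, s_m + r_m, \qquad m \ge 0.
\]
Because $f$ is not polynomial, I would first truncate to $f^{(N)} = \sum_{m \le N} f_m$, set $s_k^{(N)} = 0$ for $k > N-2$, and then solve top-down for $m = N, N-1, \ldots, 0$: at each stage, form the corrected polynomial $\tilde f_m := f_m - P_1 s_{m-1}^{(N)} - P_0 s_m^{(N)}$ (whose correction terms have been determined in previous iterations) and apply the Fischer-type decomposition $\mathcal{P}_m = P_2\, \mathcal{P}_{m-2} \oplus \mathcal{H}_m$ to produce $s_{m-2}^{(N)}$ together with the harmonic remainder $r_m^{(N)}$.

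The key quantitative input is the bound produced by (\ref{eqmainassumption}): Cauchy--Schwarz gives $\|P_2 u\|_{L^2(\mathbb{S}^{d-1})} \ge \frac{1}{C(m+D)^2}\|u\|_{L^2(\mathbb{S}^{d-1})}$ for homogeneous $u$ of degree $m$, whence $\|s_{m-2}^{(N)}\|_{L^2(\mathbb{S}^{d-1})} \le C(m+D)^2\, \|\tilde f_m\|_{L^2(\mathbb{S}^{d-1})}$. Combined with a triangle estimate on $\tilde f_m$, this produces a linear recursion for the norms $\|s_k^{(N)}\|$ in terms of the data norms $\|f_m\|$. Iterating downward accumulates multiplicative factors $\prod_j (j+D)^2$: when $\beta = 0$ the recursion decouples odd from even degrees, so only $\sim m/2$ iterations are needed to reach $s_0$ from $s_{N-2}$, producing factorial growth compatible with entire functions of order $\le 1$; when $\beta = 1$ the degree drops by only $1$ per step and one needs $\sim m$ iterations, producing growth of order $\le 1/2$. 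The threshold $\rho < (2-\beta)/2$ is precisely what is needed to balance this amplification against the decay $\|f_m\| = O((m!)^{-1/\rho})$ forced by the order assumption on $f$; once this is in place, the truncated solutions $(s^{(N)}, r^{(N)})$ stabilize in each homogeneous component and the limits define entire $s, r$ of order $\le \rho$.

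The main obstacle, as I see it, is verifying that the Fischer decomposition genuinely yields a Laplacian-harmonic remainder when $P_2 \neq c|x|^2$: the classical Fischer pairing orthogonalizes multiplication by $P_2$ against the differential operator $P_2(\partial)$ rather than $\Delta$, so a separate argument (ultimately furnished by \cite[Theorem 1]{RendAl22}) is required to ensure that $\mathcal{P}_m = P_2\, \mathcal{P}_{m-2} \oplus \mathcal{H}_m$ holds as a direct sum with $\mathcal{H}_m = \ker \Delta$. A secondary subtlety is that one must not merely show convergence of the truncations $(s^{(N)}, r^{(N)})$, but verify that the limit has order bounded by $\rho$; this relies on the sharp quadratic dependence on $m$ established in Theorem \ref{arbdimB}, since any weaker growth in the coercivity constant would raise the resulting order of $s$ above $\rho$.
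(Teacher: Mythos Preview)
The paper does not prove this theorem at all: it is quoted verbatim as the special case $\alpha = 2$, $k = 1$ of \cite[Theorem~1]{RendAl22}, and the only work done in Section~4 is to combine that citation with Theorem~\ref{arbdimB}. So there is nothing in the present paper for your argument to be compared against beyond a black-box reference.

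Your sketch is a reasonable outline of how a Fischer-decomposition proof of the cited theorem proceeds --- truncate, solve the homogeneous components top-down, control the accumulated factors $\prod_j (j+D)^2$, and check that the order threshold $\rho < (2-\beta)/2$ is exactly what makes the iterated bounds summable. The growth heuristics you give ($\sim m/2$ steps when $\beta = 0$ versus $\sim m$ steps when $\beta = 1$) are the right mechanism behind the dichotomy.

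That said, two points need repair. First, you identify the decomposition $\mathcal{P}_m = P_2\,\mathcal{P}_{m-2} \oplus \mathcal{H}_m$ with $\mathcal{H}_m = \ker\Delta$ as the main obstacle and then say it is ``ultimately furnished by \cite[Theorem~1]{RendAl22}'' --- but that \emph{is} the theorem you are trying to prove, so as written this step is circular. You need an independent reason (e.g., that the hypothesis forces $P_2 \ge 0$ on $\mathbb{S}^{d-1}$, and that nonnegative $P_2$ gives $P_2\,\mathcal{P}_{m-2} \cap \ker\Delta = \{0\}$, which together with injectivity of multiplication by $P_2$ and a dimension count yields the direct sum). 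Second, your closing remark that the order bound ``relies on the sharp quadratic dependence on $m$ established in Theorem~\ref{arbdimB}'' conflates two things: the quadratic dependence is a \emph{hypothesis} of Theorem~\ref{ThmMain1}, not something supplied by Theorem~\ref{arbdimB}. The latter is used only afterwards, to check that the hypothesis holds for the particular choice $P_2 = x_j^2$.
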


Now Theorem \ref{arbdimB} allows us to apply the preceding result to any nonhyperbolic polynomial $q$, so
 Theorem \ref{arbdimParCyl} immediately follows, since clearly $r$ is harmonic and satisfies
 $r = f$ on $\{q = 0\}$.
 





\smallskip \noindent \textbf{Data availability} Data sharing is not
applicable to this article since no data sets were generated or analyzed.

\smallskip \noindent \textbf{Declarations}

\smallskip \noindent \textbf{Conflict of interest} The authors declare that
they have no competing interests.

\smallskip \noindent \textbf{Acknowlegement}  \thanks{The first named author was partially supported by Grant PID2019-106870GB-I00 of the MICINN of Spain,  by ICMAT Severo Ochoa project
	CEX2019-000904-S (MICINN), and by
	the Madrid Government (Comunidad de Madrid - Spain)  V PRICIT (Regional Programme of Research and Technological Innovation), 2022-2024.}

\end{document}